\theoremstyle{plain}
\newtheorem{thm}{Theorem}[section]
\newtheorem{theorem}[thm]{Theorem}
\newtheorem{lemma}[thm]{Lemma}
\newtheorem{cor}[thm]{Corollary}
\numberwithin{equation}{section}
\newcommand{\Aut}{\operatorname{Aut}}
\newcommand{\SL}{\operatorname{SL}}
\newcommand{\N}{\mathbf{N}}
\newcommand{\PSL}{\operatorname{PSL}}
\newcommand{\PGL}{\operatorname{PGL}}
\newcommand{\PSU}{\operatorname{PSU}}
\def\cent#1#2{{\bf C}_{#1}(#2)}
\def\oh#1#2{{O}_{#1}(#2)}
\def\ker#1{{\rm ker}(#1)}
    \def \mod#1{\, {\rm mod} \, #1 \, }
\def\sbs{\subseteq}
\newcommand{\Out}{{\mathrm {Out}}}
\newcommand{\LC}{{\mathcal L}}
\renewcommand{\mod}{\bmod \,}
\newcommand{\gen}[1]{\langle #1 \rangle}
\newcommand{\aut}{\mathrm{Aut}}
\def \aut#1{{\rm Aut}(#1)}
\def\cent#1#2{{C}_{#1}(#2)}
\def\ker#1{{\rm ker}(#1)}
   \def \mod#1{\, {\rm mod} \, #1 \, }
\def\sbs{\subseteq}
\begin{document}

\title[Centers of Sylow Subgroups]
{Centers of Sylow Subgroups and Automorphisms}

\author{George Glauberman}
\address {Department of Mathematics, University of Chicago, 5734 S. University Ave, Chicago,
IL 60637, USA}
\email{gg@math.uchicago.edu}

\author{Robert Guralnick}
\address{Department of Mathematics, University of Southern California,
Los Angeles, CA 90089-2532, USA}
\email{guralnic@usc.edu}

\author{Justin Lynd}
\address{Department of Mathematics 
University of Louisiana at Lafayette 
Maxim Doucet Hall 
Lafayette, LA 70504, USA}
\email{lynd@louisiana.edu}
 
\author{Gabriel Navarro}
\address{Departament of Mathematics, Universitat de Val\`encia, 46100 Burjassot,
Val\`encia, Spain}
\email{gabriel.navarro@uv.es}

\keywords{Centers of Sylow subgroups, automorphisms, Kourovka notebook}

\subjclass[2010]{20D20, 20E32}

\thanks{The first author thanks the
Simons Foundation very much for its support by a Collaboration grant.  The second author 
gratefully acknowledges the support of the NSF grant DMS-1600056.
The third author  was partially supported by NSA Young Investigator Grant H98230-14-1-0312
and by an AMS-Simons travel grant.
The research of the fourth author is supported by the Prometeo/Generalitat 
Valenciana,
Proyecto MTM2016-76196-P   and FEDER funds.}

\begin{abstract}
Suppose that $p$ is an odd prime and $G$ is a finite group 
having no normal non-trivial $p'$-subgroup.   We
show that if $a$ is an automorphism of $G$ of $p$-power order centralizing a Sylow $p$-group of $G$,
then  $a$ is inner.   
\end{abstract}

\maketitle

\tableofcontents

\section{Introduction}

Let $p$ be a prime.  There has been quite a lot of interest in the problem of characterizing 
automorphisms of order a power of $p$ centralizing a Sylow $p$-subgroup of a finite group
$G$.   

In particular, Question 14.1 of  the Kourovka notebook
\cite{Ko} which was posed in 1999 asked whether if $G$ has no non-trivial normal odd order subgroups
(i.e. $O_{2'}(G)=1$),
then for any such automorphism $a$ with $p=2$, $a^2$ is inner.  This had already been answered in a 
paper of Glauberman \cite[Corollary 8]{Gl2} in 1968.   By taking $G=A_n$ with $n \ge 6$ with
$n \equiv 2, 3 \mod 4$ and $g \in S_n$ a transposition,  one sees that it is not always
true that $a$ is inner.  

If $p$ is odd, then under the assumptions that $O_p(G)=O_{p'}(G)=1$, 
Gross \cite{Gr} showed that any such automorphism is inner.  Gross used the classification of finite simple groups while Glauberman did not.
In this note, we show how to extend the result of Gross allowing the possibility of nontrivial $O_p(G)$ (and using the classification
of finite simple groups).
This was conjectured by Gross in \cite{Gr} and a partial result was obtained by Murai \cite{Mu}.    We complete the proof and show:

\begin{theorem} 
\label{thm:main}
 Let $p$ be an odd prime and $L$ a finite group with $O_{p'}(L)=1$.
Suppose that $a$ is an automorphism of $L$ whose order is a power of $p$
and $a$ centralizes a Sylow $p$-subgroup of $L$.   Then $a$  is an inner 
automorphism of $L$.
\end{theorem}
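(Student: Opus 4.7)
The plan is to set up $G = L \rtimes \langle a\rangle$ and proceed by minimal counterexample on $|L|$. Writing $P_0 \in \Syl_p(L)$ for the Sylow centralized by $a$, one has $P := P_0\langle a\rangle \in \Syl_p(G)$ with $a \in Z(P)$. The desired conclusion that $a$ is inner on $L$ is equivalent to $a \in L \cdot C_G(L)$, and since $a$ is a $p$-element one may in fact seek $\ell \in Z(P_0)$ with $a\ell^{-1} \in C_G(L)$.

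The structural reductions exploit the generalized Fitting subgroup. Since $O_{p'}(L)=1$, one has $F^{*}(L) = O_p(L) E(L)$ and $C_L(F^{*}(L)) = Z(O_p(L))$. The automorphism $a$ centralizes $O_p(L) \leq P_0$ and permutes the components. If $a$ has a nontrivial orbit $\{E_1,\dots,E_{p^k}\}$ on components, then $a$ fixing $P_0 \cap E_i$ pointwise while mapping $E_i$ into $E_{i+1}$ forces $P_0 \cap E_i \leq E_i \cap E_{i+1} \leq Z(E_i)$, a restrictive configuration I would treat separately; otherwise $a$ stabilizes each component. Applied to $E(L)/Z(E(L))$, a direct product of non-abelian simple groups (with the $p'$-simple factors handled by direct inspection of coprime actions), Gross's theorem produces some $\ell_1 \in E(L)$ with $a\ell_1^{-1}$ acting trivially on $E(L)/Z(E(L))$. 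Inductively applying the theorem to proper $a$-stable characteristic subgroups such as $F^{*}(L)$ (where $O_{p'}$ also vanishes) further reduces to the case where $a$ acts trivially on $F^{*}(L)$.

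The heart of the argument, and the main obstacle, is bridging between the layer and $O_p(L)$. After arranging $a$ to be trivial on $E(L)/Z(E(L))$, the residual action of $a$ on $E(L)$ is a homomorphism $E(L) \to Z(E(L)) \leq O_p(L)$ that vanishes on $P_0 \cap E(L)$, a full Sylow $p$-subgroup of each component; forcing it to vanish identically requires classification-based information on the $p$-part of the Schur multipliers of quasisimple groups, for instance to rule out anomalies in cases like $\SL_n(q)$ with $p \mid n$ and $q \equiv 1 \pmod p$. Murai's partial result covers the complementary scenario where $E(L) = 1$ (so $L$ is $p$-solvable), and the new content of the theorem lies in combining these two streams when both $O_p(L)$ and $E(L)$ are nontrivial. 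Once $a$ is shown to centralize $F^{*}(L)$, one has $[a,L] \leq C_L(F^{*}(L)) = Z(O_p(L)) \leq P_0$, and a short cocycle computation using $a \in Z(P)$ and the fact that the target of the commutator map lies in an abelian $p$-subgroup centralized by $a$ produces the required element $\ell \in Z(P_0)$ with $a\ell^{-1} \in C_G(L)$.
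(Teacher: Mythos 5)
Your outline tracks the paper's second proof (reduce to the generalized Fitting subgroup, apply Gross's theorem to the components, then patch with an averaging argument), but the step you yourself call the heart of the argument is misdiagnosed, and the real difficulty is left unaddressed. First, the step you claim needs classification input is free: once $a\ell_1^{-1}$ acts trivially on $E(L)/Z(E(L))$, the map $x \mapsto [x,a\ell_1^{-1}]$ is a homomorphism from $E(L)$ into the abelian group $Z(E(L))$, and $E(L)$ is perfect, so it vanishes identically with no reference to Schur multipliers; the paper dispatches this in one line (``and so on $Q$ since it is perfect''). The genuine obstacle is elsewhere. The element $q \in Q$ such that $aq^{-1}$ centralizes the component $Q$ is determined only modulo $Z(Q)$, and your concluding cocycle/averaging step requires the corrected automorphism to still centralize a Sylow $p$-subgroup of $L$ (this is the hypothesis $M = NC_M(T)$ in the paper's Lemma 2.1, without which the averaging does not start). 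Since $a$ commutes with $S$, one only gets that $N_S(Q)$ centralizes $q$ \emph{modulo} $Z(Q)$; you must show that the representative in the coset $qZ(Q)$ can be chosen to be centralized on the nose by $N_S(Q)$ — in fact by every automorphism commuting with conjugation by $q$ mod $Z(Q)$ — so that the product of the $S$-conjugates of $q$ over an orbit of components is an $S$-central element $b$ with $ab$ centralizing $E(L)S=L$. That is exactly the paper's Theorem 4.2 and Corollary 4.3, whose proof reduces to $\PSL_d(q)$, $\PSU_d(q)$, $E_6(q)$ and ${}^2E_6(q)$ with $p$ dividing both the Schur multiplier and $|\Out(Q)|$, and then performs a wreath-product computation in the normalizer of a maximal torus (Lemma 4.7). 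You correctly single out $\SL_n(q)$ with $p \mid n$ and $q \equiv 1 \pmod p$ as the dangerous configuration, but you attach it to a step that is trivially true and never formulate the coset-representative problem that actually requires it; as written, the proposal has a gap precisely where the new content of the theorem lies.

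A secondary remark: the paper's first proof is entirely different and shorter — adjust $a$ by a norm/transfer map into $Z(J(\hat S))$ to make it weakly closed in $N_{\hat G}(J(\hat S))$, promote this to weak closure in all of $\hat G$ by Glauberman's replacement theorem for odd $p$, and conclude by the $Z_p^*$ theorem and induction on $|a|$ — thereby avoiding the component-by-component analysis altogether. Your route is viable, but only once the representative-choosing statement for quasisimple groups is supplied and proved.
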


It is not hard to see that Theorem \ref{thm:main}
 can fail if $O_{p'}(L) \ne1$. 
 For example, take $L = G \times O_{p'}(L)$ and choose $a$
to be a noninner automorphism of $O_{p'}(L)$, viewed as an automorphism of 
$L$ acting trivially on $G$.
 A consequence of
 Theorem \ref{thm:main} is the following. Recall that $F^*(G)$ is the generalized Fitting subgroup of $G$.

\begin{cor} 
\label{cor:center}   Let  $p$ be an odd prime, and let
 $G$ be a finite group with $O_{p'}(G)=1$. Let $P$ be 
a Sylow $p$-subgroup of $G$.  Then $Z(P) \le F^*(G)$.
\end{cor}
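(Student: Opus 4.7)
The plan is to apply Theorem~\ref{thm:main} to the action of $Z(P)$ on $L := F^{*}(G)$ by conjugation, and then use the fundamental property that $C_{G}(F^{*}(G)) \le F^{*}(G)$.

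First I would fix $z \in Z(P)$ and consider the automorphism $a \in \Aut(L)$ obtained from conjugation by $z$. The order of $a$ divides the order of $z$, which is a power of $p$. Next, since $L$ is normal in $G$, the intersection $P \cap L$ is a Sylow $p$-subgroup of $L$; as $z \in Z(P)$ centralizes all of $P$, it certainly centralizes $P \cap L$, so $a$ centralizes a Sylow $p$-subgroup of $L$.

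The key hypothesis to verify is $O_{p'}(L)=1$. Here I would note that $O_{p'}(F^{*}(G))$ is characteristic in $F^{*}(G)$, which is in turn normal in $G$, so $O_{p'}(F^{*}(G))$ is a normal $p'$-subgroup of $G$; hence it lies in $O_{p'}(G) = 1$. At this point all hypotheses of Theorem~\ref{thm:main} are in place, so we conclude that $a$ is an inner automorphism of $L$: there exists $f \in L$ with $zf^{-1} \in C_{G}(L)$.

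Finally I would invoke the standard fact that $C_{G}(F^{*}(G)) \le Z(F^{*}(G)) \le F^{*}(G)$, which gives $zf^{-1} \in F^{*}(G)$ and therefore $z \in F^{*}(G)$, as required. The argument is short and essentially mechanical once Theorem~\ref{thm:main} is available; the only subtlety, and therefore the main thing to be careful about, is the verification that $O_{p'}(F^{*}(G)) = 1$, since without the promotion from ``characteristic in $F^{*}(G)$'' to ``normal in $G$'' the hypothesis of the main theorem would not apply.
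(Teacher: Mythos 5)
Your proposal is correct and follows essentially the same route as the paper: apply Theorem~\ref{thm:main} to the automorphism of $F^*(G)$ induced by an element of $Z(P)$, and then use $C_G(F^*(G)) = Z(F^*(G)) \le F^*(G)$ to conclude. The only difference is that you spell out the verification that $O_{p'}(F^*(G))=1$, which the paper leaves implicit; that step is indeed the one hypothesis worth checking, and your argument for it is right.
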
 

\smallskip

If $p=2$, the analogous corollary is that if $G$ is a finite group with no odd
normal subgroups, then $Z(P)/(Z(P)\cap F^*(G))$ is of exponent $2$ (and because
of Glauberman's result, this does not depend upon the classification of finite
simple groups).   If $G=S_n$ with $n \equiv 2 \mod 4$, then each transposition is
central in some Sylow $2$-subgroup $P$ and so certainly it is not the case that
$Z(P) \le F^*(G)$. 

We will give two proofs of Theorem \ref{thm:main}.   The first proof uses the
Thompson subgroup and  the $Z_p^*$ theorem for $p$ odd (the analog of
Glauberman's $Z^*$ theorem for $p=2$).   In contrast to Glauberman's $Z^*$
theorem, the $Z_p^*$ theorem relies on the classification of finite simple
groups. 

The second proof reduces to the 
almost simple case (where we use Gross' result \cite{Gr})  and to a more subtle result
about automorphisms of quasisimple groups.  See Theorem \ref{quasisimple}. 

We mention that there is a result directly analogous to Theorem \ref{thm:main}
that applies to a centric linking system $\LC$ associated to saturated fusion
system over a $p$-group $S$ with $p$ odd. Namely, any automorphism of $\LC$
which restricts to the identity on $S$, appropriately defined, is ``conjugation
by" an element of $Z(S) \leq \Aut_\LC(S)$. This follows by combining
\cite[Proposition~III.5.12]{AschbacherKessarOliver2011}, which connects
automorphisms of $\LC$ with limits of the center functor, with
\cite[Theorem~3.4]{Oliver2013} or \cite[Theorem~1.1]{GL16}, which show that the
center functor is acyclic at odd primes.  Likewise, it is shown in work to
appear of the first and third authors that there is an analogue of
\cite[Corollary~8]{Gl2} for centric linking systems at the prime $2$.  These
purely local results do not require an appeal to the Classification.

The paper is organized as follows.  In the next section we introduce some notation
and prove a few preliminary results.   In the following section, we discuss the
$Z_p^*$ theorem and indicate some connections with Gross' result and then give
our first proof.  In the next section we prove Theorem \ref{quasisimple} (which is stronger
than the main theorem for quasi-simple groups) and then show how to deduce
Theorem \ref{thm:main} from the results about simple and quasi-simple groups.   Finally 
we deduce the corollary.

We will need to use detailed properties of automorphism groups of simple groups.
Most of these results were first obtained by Steinberg.   We refer the reader to the
reference \cite[2.5]{GLS}. 
In particular, if $p \ge 3$, then the only quasi-simple groups with nontrivial  outer automorphisms of order a power
of $p$ are groups of Lie type.  Moreover,  if $p \ge 5$,  we are either in type A and there may be
diagonal automorphisms,  or else only field automorphisms are possible.   If $p=3$,  there
are more cases with diagonal automorphisms and triality needs to be dealt with as well.  

In the final section, we prove Theorem \ref{zpperm}   on permutation groups that is equivalent to the $Z_p^*$
theorem. 

We thank I. M. Isaacs for collaborating with us on an earlier version where we gave a new
proof of Glauberman's result and Gross' result.  
We also thank Richard Lyons, Gunter Malle and Ron Solomon for comments
on an earlier version of this paper.  

\section{Notation and Preliminary Results}

Let $G$ be a finite group and $p$ a prime.    We recall some notation (see \cite{I} for 
more details).   The maximal normal $p$-subgroup of $G$ is denoted
by  $O_{p}(G)$  and $O_{p'}(G)$ is the maximal normal $p'$-subgroup
of $G$.   The Fitting subgroup, $F(G)$, is the maximal normal nilpotent subgroup
of $G$ and is the direct product of the subgroups $O_r(G)$ where $r$ ranges over all prime
divisors of $|G|$.   

A quasi-simple group is a group $Q$ such that $Q$ is perfect (i.e. $Q=[Q,Q]$)  and $Q/Z(Q)$ is
a nonabelian simple group.   A component of $G$ is a subnormal quasi-simple
subgroup.  Then $E(G)$ is the subgroup of $G$ generated by all components of $G$
(and is the central product of all the components of $G$).  The generalized Fitting
subgroup $F^*(G)$ of $G$ is the central product $E(G)F(G)$.   It has the very important
property that $C_G(F^*(G)) = Z(F^*(G)) =Z(F(G))$.

\begin{lemma}  \label{gab-marty}  
Suppose that a finite group $L$ acts via automorphisms on
   a finite  abelian  $M$.
   Let $N$ be an $L$-invariant subgroup
   of $M$  such that
   $L$ acts trivially on the $p$-group  $M/N$. Assume that $M=NC_M(T)$
    for some Sylow $p$-subgroup
   $T$ of $L$. Then $M=NC_M(L)$. 
   \end{lemma}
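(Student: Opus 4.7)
The plan is to fix an arbitrary $m \in M$ and produce $n \in N$ with $mn \in C_M(L)$; the argument combines a transversal-averaging trick (made possible by $M$ being abelian) with the fact that $[L:T]$ is coprime to the order of the image of $m$ in the $p$-group $M/N$.

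First, using $M = NC_M(T)$, I reduce to the case $m \in C_M(T)$: writing $m = n_0 m_0$ with $n_0 \in N$ and $m_0 \in C_M(T)$, it suffices to show $m_0 \in NC_M(L)$, since then $m = n_0 m_0 \in N \cdot NC_M(L) = NC_M(L)$. Next, I choose a set $R$ of left coset representatives of $T$ in $L$ and set $m^{*} := \prod_{r \in R} r(m)$; the product makes sense because $M$ is abelian. I will verify that $m^{*}$ does not depend on the choice of $R$ and is $L$-invariant. For the first, replacing $r$ by $rt$ with $t \in T$ gives $(rt)(m) = r(t(m)) = r(m)$ because $m \in C_M(T)$. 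For the second, given $l \in L$ and $r \in R$, write $lr = r' t'$ with $r' \in R$ and $t' \in T$; then $(lr)(m) = r'(t'(m)) = r'(m)$, and $r \mapsto r'$ permutes $R$, so $l(m^{*}) = m^{*}$. Thus $m^{*} \in C_M(L)$.

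Finally, I relate $m^{*}$ to $m$. Because $L$ acts trivially on $M/N$, each $r(m)$ lies in the coset $mN$, so $m^{*} = m^{k} n^{*}$ for some $n^{*} \in N$, where $k = [L:T]$. Hence $\bar m^{k} = \overline{m^{*}}$ in $M/N$ lies in the image of $C_M(L) \to M/N$. Since $M/N$ is a $p$-group and $\gcd(k,p) = 1$, the cyclic subgroups $\langle \bar m^{k} \rangle$ and $\langle \bar m\rangle$ of $M/N$ coincide, so $\bar m$ is in this image as well; that is, $m \in C_M(L) N$, as desired. The only conceptual point is recognizing that $\prod_{r \in R} r(m)$ is $L$-invariant precisely because $M$ is abelian and $m$ is $T$-fixed; once this averaging is in hand, the rest is a short extraction using the coprimality of $[L:T]$ with $|M/N|$.
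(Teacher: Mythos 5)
Your proof is correct and follows essentially the same route as the paper's: an averaging/norm argument (the paper products over the $L$-orbit of $m$, of size prime to $p$ since the stabilizer contains $T$, while you product over a transversal of $T$ in $L$), followed by extracting $\bar m$ from $\bar m^{k}$ in the $p$-group $M/N$ using $\gcd(k,p)=1$. The differences are purely cosmetic.
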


\begin{proof}
Let $m \in C_M(T)$.
Thus,  $X:=\{m^{\ell}  \mid \ell \in L\}$ is an $L$-invariant subset of $M$ of cardinality 
$e$ with $e$ prime to $p$.
Let $m' = \prod_{x \in X} x$.  Clearly, 
$m' \in C_M(L)$. Using that $L$
acts trivially on $M/N$, we have that 
 $m' N = m^eN$. Then $m^e \in \cent ML N$. Since
 $e$ is prime to $p$, we have that
  the map $x \mapsto x^e$ is a bijection
 $C_M(T)/C_N(T)  \rightarrow C_M(T)/C_N(T)$. 
 Therefore $C_M(T) \sbs \cent ML N$, and we deduce that
 $M= \cent ML N$. \end{proof}

The following handles an easy case of the theorem and gives a reduction to the case that
$L=E(L)T$ where $T$ is a Sylow $p$-subgroup of $L$.   The next two results hold without
assuming that $a$ has order a power of $p$, but that is the important case and the only
case we use. 

\begin{lemma}\label{2.2}  Let $L$ be a finite group, $p$ prime and $O_{p'}(L)=1$.  If $a$ is
an automorphism of order a power of $p$ of $L$ centralizing $E(L)T$ with $T$ a Sylow $p$-subgroup of
$L$, then $a$ induces an inner automorphism of $L$.
\end{lemma}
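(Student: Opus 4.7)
My plan is to apply Lemma~\ref{gab-marty} to a suitable abelian $L$-invariant $p$-subgroup of $H = L\langle a\rangle$. First I would observe that $a$ centralizes the generalized Fitting subgroup $F^*(L) = E(L)O_p(L)$: the hypothesis gives centralization of $E(L)$ directly, and of $O_p(L) \le T$ because $a$ centralizes $T$. Since $O_{p'}(L) = 1$ and $M := Z(F^*(L))$ is a characteristic abelian subgroup of $L$, its $p'$-part must be trivial, so $M$ is a $p$-group.

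Next I would set $B = M\langle a\rangle \le H$ and check that $B$ is an abelian $L$-invariant $p$-subgroup of $H$: it is abelian because $a$ centralizes $M \le F^*(L)$; it is a $p$-group because both $M$ and $\langle a\rangle$ are; and it is $L$-invariant because a direct calculation in $H$ gives $lal^{-1} = \alpha(l)^{-1}a$, where $\alpha(l) := a(l)l^{-1}$ lies in $C_L(F^*(L)) = M$. (The containment $\alpha(l) \in M$ holds because $a$ fixes $F^*(L)$ pointwise, and so $a$ and conjugation by $l$ induce the same automorphism of $F^*(L)$.) The same calculation shows $L$ acts trivially on $B/M \cong \langle a\rangle$.

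I would then invoke Lemma~\ref{gab-marty} with $L$ acting on $B$ and subgroup $N = M$. Since $a \in C_B(T)$ by hypothesis, $B = M\langle a\rangle \subseteq MC_B(T)$, so the lemma yields $B = MC_B(L)$. Writing $a = mc$ with $m \in M \le L$ and $c \in C_B(L) \subseteq C_H(L)$, we obtain $ala^{-1} = mlm^{-1}$ for every $l \in L$, so $a$ acts on $L$ as the inner automorphism induced by $m$, as required.

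The one nontrivial step is the construction of $B$ and the verification that $\alpha(l) \in Z(F^*(L))$, which is precisely where the centralization of $F^*(L)$ by $a$ enters. Once this is in place, both the $L$-invariance of $B$ and the triviality of the $L$-action on $B/M$ follow, and Lemma~\ref{gab-marty} dispatches the rest.
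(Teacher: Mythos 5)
Your proof is correct and follows essentially the same route as the paper: both arguments hinge on observing that $a$ centralizes the relevant generalized Fitting subgroup and then applying Lemma~\ref{gab-marty} (the averaging argument) to an abelian $p$-group containing $a$ modulo which $L$ acts trivially. The only cosmetic difference is that the paper takes that abelian group to be $Z(F(G))$ for the semidirect product $G=L\langle a\rangle$, whereas you build it by hand as $Z(F^*(L))\langle a\rangle$.
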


\begin{proof}  Let $G =  L \langle a \rangle$ be the semidirect product. 
Notice that if $Q$ is a component of $G$, since $Q$ is perfect
and $Q/(Q \cap L)$ is cyclic, we have that $Q \sbs L$. Thus  
 the components of $G$ are the components of $L$.
Hence, we have that $F^*(G) = E(L)S$ where $S = F(G)$ is a $p$-group.

By hypothesis, $a$ is in the center of the Sylow $p$-subgroup  $T \langle a \rangle $
       of $G$ and $a$ centralizes $E(L)T$. Since $F(G) \leq T\langle a \rangle$, we have that  $a$
        centralizes $F^*(G) = E(G)F(G) =
     E(L)F(G)$.  So $a$  lies in $Z(F^*(G)) = Z(F(G)) = Z(S)$.

    Note that $M:=Z(S)$ is an abelian $p$-group
and $M/N$ is centralized by $L$ where $N = M \cap L$.   Now apply the previous lemma to  
conclude that there exists $z \in N\le L$ so that $az$ centralizes $L$ and so $a$ induces
conjugation by $z^{-1}$.
\end{proof}

A special case of the previous result is the following:

\begin{cor}  \label{2.3} Let $L$ be a finite group with $F^*(L)=O_p(L)$.  If $a$
is a nontrivial automorphism of $L$ of order a power of $p$ which centralizes
a Sylow $p$-subgroup $P$ of $L$, then $a$ is induced by conjugation
by an element of $Z(P)$.
\end{cor}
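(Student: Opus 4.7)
The plan is to invoke Lemma \ref{2.2} and then refine the conjugating element to lie in $Z(P)$ using the specific hypothesis on $F^*(L)$.

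First I would observe that $F^*(L) = O_p(L)$ forces $E(L) = 1$ and $O_{p'}(L) = 1$, since the $p'$-part of $F(L)$ and all components are contained in $F^*(L)$. Hence with $T := P$, the hypothesis ``$a$ centralizes $E(L)T$'' of Lemma \ref{2.2} reduces precisely to ``$a$ centralizes $P$,'' which is given. Lemma \ref{2.2} then yields an element $w \in L$ such that $a$ acts on $L$ as conjugation by $w$. Moreover, inspection of the proof of Lemma \ref{2.2} (applied in $G = L\langle a \rangle$) shows that $w$ may be taken in $Z(S) \cap L$, where $S = F(G) = O_p(G)$; indeed, $E(L) = 1$ and $O_{p'}(L) = 1$ together with $a$ of $p$-power order force $F^*(G) = O_p(G)$.

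The remaining task, which is the only substantive step, is to upgrade $w \in L$ to $w \in Z(P)$. Since $a$ centralizes $P$ and $a$ induces conjugation by $w$ on $L$, for every $x \in P$ we have $x = a(x) = w^{-1}xw$, so $w \in C_L(P)$. Because $O_p(L) \le P$, this gives $w \in C_L(O_p(L))$. The hypothesis $F^*(L) = O_p(L)$ now provides $C_L(O_p(L)) \le O_p(L) \le P$, so $w \in P \cap C_L(P) = Z(P)$, as required. (Nontriviality of $w$ when $a \ne 1$ is automatic, but is not asserted by the statement.)

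I do not expect any real obstacle here: the first paragraph is essentially a translation of hypotheses, and the second paragraph is the two-line argument using the characteristic property $C_L(F^*(L)) \le F^*(L)$ in the special form $C_L(O_p(L)) \le O_p(L)$. The only point that requires mild care is ensuring that the conjugating element produced by Lemma \ref{2.2} actually lies in $C_L(P)$, which follows immediately from the fact that $a$ centralizes $P$ pointwise.
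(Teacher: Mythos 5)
Your proof is correct and follows the route the paper intends: the paper simply declares the corollary a special case of Lemma \ref{2.2}, and your first paragraph is exactly that reduction. Your second paragraph supplies the (easy but necessary) refinement that the conjugating element lies in $Z(P)$, via $w \in C_L(P) \le C_L(O_p(L)) \le O_p(L) \le P$, which is precisely the detail the paper leaves implicit.
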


The following result will be used in studying quasi-simple groups.
If $g \in L$, then $g^L$ denotes the conjugacy class of $g$ in $L$.

\begin{lemma} \label{p-central}   Let $L$ be a finite group with  $Z$ a  central $p$-group.
Let $g \in L$ such that $|L:C_L(g)|$ is not divisible by $p$.   Then 
$g^L \cap gZ=\{g\}$.
\end{lemma}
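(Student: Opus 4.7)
The plan is to set up a commutator homomorphism argument. The key observation is that the subset of $L$ that conjugates $g$ into the coset $gZ$ forms a subgroup, and modulo $C_L(g)$ it embeds into $Z$; since $|L:C_L(g)|$ is coprime to $p$ while $Z$ is a $p$-group, the two indices force the subgroup to coincide with $C_L(g)$.

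More concretely, I would first define
\[
K = \{\ell \in L \mid [\ell,g] \in Z\} = \{\ell \in L \mid \ell g \ell^{-1} \in gZ\}.
\]
Clearly $C_L(g) \le K$ and $Z \le K$. A routine commutator identity, which simplifies dramatically because $Z$ is central, shows that $[\ell_1\ell_2, g] = [\ell_1,g][\ell_2,g]$ for $\ell_1,\ell_2 \in K$, so $K$ is a subgroup and the map
\[
\phi : K \longrightarrow Z, \qquad \phi(\ell) = [\ell,g] = \ell g \ell^{-1} g^{-1},
\]
is a group homomorphism. Its kernel is exactly $C_L(g)$, so $K/C_L(g)$ embeds in $Z$ and thus has $p$-power order.

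Next I would use the hypothesis. Since $C_L(g) \le K \le L$, the index $[K : C_L(g)]$ divides $[L : C_L(g)]$, which is coprime to $p$. Combined with the previous step, this forces $[K:C_L(g)] = 1$, i.e.\ $K = C_L(g)$.

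Finally, to conclude: if $g' \in g^L \cap gZ$, write $g' = \ell g \ell^{-1}$ for some $\ell \in L$. The condition $g' \in gZ$ is precisely $\ell \in K = C_L(g)$, whence $g' = g$. Thus $g^L \cap gZ = \{g\}$.

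The only potentially delicate step is verifying that $\phi$ is a homomorphism, but this is immediate from the centrality of $Z$; beyond that the argument is just a comparison of a $p$-power index with a $p'$-index. I do not foresee any serious obstacle.
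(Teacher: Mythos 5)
Your proof is correct. It rests on the same basic mechanism as the paper's---centrality of $Z$ makes $\ell \mapsto [\ell,g]$ multiplicative, and a coprimality comparison against the $p$-group $Z$ then forces the commutator to vanish---but the packaging is genuinely different. The paper argues element-by-element: given $g^u = gz$, it notes $[g,u^r]=z^r$, reduces by induction on $|L|$ to the case $N_L(C_L(g)) = L$ so that $L/C_L(g)$ is a $p'$-group, and then raises $u$ to the $p'$-part of its order to land in $C_L(g)$ and conclude $z^r=1$, hence $z=1$. You instead globalize: the transporter $K=\{\ell \in L : [\ell,g]\in Z\}$ is a subgroup, $\ell\mapsto[\ell,g]$ is a homomorphism $K\to Z$ with kernel exactly $C_L(g)$, so $[K:C_L(g)]$ is simultaneously a power of $p$ and a divisor of the $p'$-number $[L:C_L(g)]$, forcing $K=C_L(g)$. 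This buys you a shorter argument with no induction and no appeal to the normalizer of $C_L(g)$; the paper needs that reduction precisely because it works with a single conjugating element whose order modulo $C_L(g)$ must be controlled. The steps you flag as routine (closure of $K$ under products and inverses, the homomorphism property, the kernel computation) all check out, since $[\ell_1\ell_2,g]=[\ell_1,g][\ell_2,g]$ once the commutators are central.
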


\begin{proof}   
Suppose that $g^u = gz$  for some  $z \in Z$ and $u \in L$.   Thus $[g,u]=z$ and so $[g,u^r]=z^r$
for every integer $r$.  

Note also that $u$ normalizes the centralizer of $g$.
If $H=N_L(C_L(g))$, then the hypotheses
of the lemma are satisfied in $H$, and working by induction on $|L|$,
we may   assume that $H=L$.  Now, $L/C_L(g)$
is a $p'$-group. 
Let $r$ be the $p'$-part of the
order of $u$, and let $v = u^r$.  Then $v$ is in $C_L(g)$ and $g^v = gz^r$.   Hence, $z^r = 1$.  
Since $Z$ is
a $p$-group, $z = 1$. 
\end{proof}

\section{The $Z_p^*$ Theorem and a Proof}

Let $G$ be a group and $x \in H$ for some subgroup $H$ of $G$.  We say 
$x$ is isolated or weakly closed in $H$ with respect to $G$ if $x^G \cap H =\{x\}$.

\begin{thm} \label{zp*}  Let $G$ be a finite group with a Sylow $p$-subgroup $S$.
If $x \in S$ is  isolated with respect to $G$, then 
$x$ is central modulo $O_{p'}(G)$.
\end{thm}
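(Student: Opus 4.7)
My plan is to reduce, via standard normal-subgroup and generalized-Fitting-subgroup arguments, to a statement about non-abelian simple groups, and then invoke the classification. The first step is to pass to $\bar G := G/O_{p'}(G)$. Sylow $p$-subgroups of $G$ project isomorphically onto those of $\bar G$, and the isolation of $x$ in $S$ descends to isolation of $\bar x$ in $\bar S$: if $\bar x^{\bar g} \in \bar S$, then $x^g \in SN$ with $N = O_{p'}(G)$, so by Sylow in $SN$ there is $n \in N$ with $x^{gn} \in S$, whence $x^{gn} \in x^G \cap S = \{x\}$ and thus $\bar x^{\bar g} = \bar x$. Since $O_{p'}(\bar G) = 1$, it suffices to prove that when $O_{p'}(G) = 1$ and $x$ is isolated in $S$, then $x \in Z(G)$.

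Next, I would induct on $|G|$ and take a minimal counterexample. Isolation forces $S \le C_G(x)$, so $x \in Z(S)$. Let $H := \langle x^G \rangle \trianglelefteq G$. Then $S \cap H$ is a Sylow $p$-subgroup of $H$, and since $x^H \cap (S \cap H) \subseteq x^G \cap S = \{x\}$, the element $x$ is isolated in $S \cap H$ with respect to $H$. Also $O_{p'}(H) \le O_{p'}(G) = 1$. If $H < G$, induction yields $x \in Z(H)$; but then every $G$-conjugate of $x$ lies in $H$ and commutes with $x$, so $x \in Z(G)$, contradicting the counterexample assumption. Hence $G = \langle x^G \rangle$.

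Now I would exploit $F^*(G) = E(G) O_p(G)$ together with $C_G(F^*(G)) \le F^*(G)$. Since $O_p(G) \le S$, the element $x$ centralizes $O_p(G)$. If $E(G) = 1$, then $F^*(G) = O_p(G)$ and $x \in Z(O_p(G))$; the latter is characteristic in $G$, so $G = \langle x^G \rangle \le Z(O_p(G))$ is abelian, giving $x \in Z(G)$ --- a contradiction. So $E(G) \ne 1$. Writing $E(G)$ as the central product of components $L_1, \dots, L_k$, the $p$-element $x$ permutes these components; combining weak closure of $x$ in $S$ with the fact that a nontrivial $x$-orbit among the $L_i$ would produce further $S$-conjugates of $x$, one expects to force $x$ to normalize each $L_i$. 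The problem then passes to the almost simple group $L_i \langle x \rangle / Z(L_i)$, where the image of $x$ would be a nontrivial $p$-element weakly closed in a Sylow $p$-subgroup. The main obstacle --- the step that genuinely invokes the classification of finite simple groups --- is ruling this out for $p$ odd; this case-by-case analysis is deferred in the paper to the equivalent permutation-theoretic formulation given in Theorem \ref{zpperm} of the final section.
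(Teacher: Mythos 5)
Your reduction skeleton (pass to $G/O_{p'}(G)$, take a minimal counterexample, replace $G$ by $\langle x^G\rangle$, split on whether $E(G)=1$, and push the problem into the components) is the same one the paper sketches, but two intermediate steps are justified by the wrong mechanism. The deduction ``every conjugate of $x$ lies in $H$ and commutes with $x$, so $x\in Z(G)$'' needs the standard observation that an element isolated in $S$ is isolated in its centralizer: a distinct conjugate commuting with $x$ would be a $p$-element of $C_G(x)$, hence conjugate into $S\in\Syl_p(C_G(x))$ by an element of $C_G(x)$, forcing it to equal $x$. Merely commuting with all of one's conjugates does not give centrality. Likewise, $x$ normalizes each component not because a nontrivial orbit ``would produce further $S$-conjugates of $x$'' (all $S$-conjugates of $x$ equal $x$, since $x\in Z(S)$), but because $O_{p'}(G)=1$ forces $p$ to divide $|L_i/Z(L_i)|$, so $x$ centralizes the subgroup $S\cap L_i\not\le Z(L_i)$, and distinct components meet only in central elements; hence $L_i^x=L_i$.

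The genuine gap is at the end. You defer the almost-simple case to Theorem \ref{zpperm}, but that is circular: in this paper Theorem \ref{zpperm} is \emph{deduced from} Theorem \ref{zp*} (its proof invokes the $Z_p^*$-theorem directly), and the remark following it only notes that the implication can be reversed. The paper instead closes the argument by citation: the statement for elements of order $p$ is proved in \cite{Ar} and \cite{GR}, and follows from \cite[7.8.2, 7.8.3]{GLS}, with Gross's theorem \cite{Gr} available to pass from the almost simple to the simple case; the statement for general $p$-power order is then obtained via Lemma \ref{wc}, which shows that powers of weakly closed elements are weakly closed. Your induction never reduces to elements of order $p$, so as written you cannot even quote those references; you would need either to insert the Lemma \ref{wc} reduction or to carry out the classification-based case analysis for simple groups yourself (the paper sketches one: alternating and sporadic groups directly, root elements in defining characteristic, reality of semisimple elements when $-1$ lies in the Weyl group, and a separate treatment of $\PSL_n$, $\PSU_n$, certain even-dimensional orthogonal groups, $E_6(q)$ and ${}^2E_6(q)$).
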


 The result is usually
stated for elements of order $p$ but this implies the result for elements of order $p^a$ by
Lemma \ref{wc} below.  We also note that it is easy to see that if $x$ is isolated in
a Sylow $p$-subgroup, then $x$ is isolated in its centralizer (i.e. it does not commute
with any distinct conjugate). 

If $p=2$ and $x$ is an involution, then this is Glauberman's $Z^*$ theorem \cite{Glz} and does not
depend on the classification of finite simple groups.  So assume that $p$ is odd. 

It was observed by many that this follows from the classification of finite simple  groups (with some
effort) and always stated with the extra assumption that $x$ has order $p$.   It was proved
in \cite[Thm. 1]{Ar}, \cite[Thm. 4.1]{GR} and follows easily by \cite[7.8.2, 7.8.3]{GLS}.
Interestingly, the first two proofs both used \cite{Gr} to reduce from the almost simple
case (i.e. $F^*(G)$ is simple) to the simple case.  The last proof uses a different result about weakly closed subgroups
of order $p$.   The result for any $p$-element $x$ follows by the result for elements of order $p$ and
Lemma \ref{wc} below.

We give a quick sketch to indicate the connection with Gross' result.
There is no loss in assuming $O_{p'}(G)=1$ and then showing $x \in Z(G)$.   The result
is clear if $x \in O_p(G)$; so assume this is not the case.  
By induction we assume that $G$ is the   normal closure 
$\langle x^G \rangle$ of $x$  because otherwise we obtain $x \in Z(\langle  x^G  \rangle)
\le O_p(G)$.
If $E(G)=1$, then $F^*(G)=O_p(G)$ and $x \in C_G(F^*(G)) =Z(O_p(G))$ and the result holds.
Let  $Q$ be a component of  $G$.  Thus $|Q/Z(Q)|$ has order divisible by $p$, 
whence $x$ normalizes each
component and therefore so does $G$.  Note that $x$ must act nontrivially on $Q/Z(Q)$
(as $G$ is the normal closure of $x$).    
 
 Clearly $xZ(Q)$ is isolated in $G/Z(Q)$.   First assume that $Z(Q)=1$ and so
 we may assume that $G$ is almost simple. 
At this point, we can invoke \cite{Gr} to conclude that $x$ induces an inner automorphism
and so reduce to the simple case. 
In order to avoid that, we use \cite[7.8.2, 7.8.3]{GLS} (see also \cite[4.250]{Gor})
to reduce to the simple case and to a short list of possibilities.   In all these cases,
there is an involution inverting $x$ and the result follows.  

More generally this shows that $xZ(Q)$ is central in $G/Z(Q)$ whence $x \in O_p(G)$
which contradicts the fact that $G=\langle x^G \rangle$. 

One can also prove the result more directly after reducing to the case of simple groups.
If $G$ is alternating one sees that $N_G(\langle x \rangle ) \ne C_G(\langle x \rangle)$
for any nontrivial $p$-element $x$, whence $x$ is not isolated.  One checks the sporadic
groups directly.  If $G$ is a finite group of Lie type in characteristic $p$, the center consists
of root elements (or products of commuting short and long root elements in a few cases)
and it is easy to check.   If $G$ is a finite group of Lie type in characteristic $r \ne p$, then
in most groups, we have $-1$ in the Weyl group and every semisimple element of odd order is real,
whence not isolated.   This leaves the cases $\PSL_n(q), n > 2, \PSU_n(q), n > 2$ and 
orthogonal groups in dimension $2m$ with $m > 3$ odd and $E_6$ and ${^2}E_6(q)$.
The argument for the classical groups is an easy linear algebra argument and the group
of type $E_6$ follow by inspection of normalizers of maximal tori.  See \cite{GMN}
for similar arguments (proving a somewhat different result).

\begin{lemma}
\label{wc}
Let $G$ be a finite group with Sylow $p$-subgroup $S$. If $a \in S$ is weakly
closed in $S$ with respect to $G$, then any power of $a$ has the same property.
\end{lemma}

\begin{proof}
The hypothesis implies that $a \in Z(S)$. Let $b$ be a power of $a$, and let $g
\in G$ with $b_1 := b^{g^{-1}} \in S$.    Note that $a$ commutes with $b_1$ and
so $a^g$ commutes with $b$.   So $a^g$ is a $p$-element in $C_G(b)$ and
$S$ is a Sylow $p$-subgroup of $C_G(b)$.  Thus, $a^{gh} \in S$ for some $h \in C_G(b)$.
Since $a$ is weakly closed in $S$, $a^{gh} = a$ and so also $b^{gh}=b$.  Thus, $b^g=b$
whence $b_1=b$ and so $b$ is isolated in $S$.
\end{proof}

We can now prove the main theorem. 

\begin{theorem}
Let $p$ be an odd prime, and let $G$ be a finite group with Sylow $p$-subgroup
$S$. Let $a$ be an automorphism of $G$ of $p$-power order which centralizes
$S$. If $O_{p'}(G) = 1$, then $a$ is inner.
\end{theorem}
\begin{proof}
The argument is similar in part to the proof of
\cite[Lemma~8.2]{GL16}.  We induct on the order of $a$.  Let
$\hat{G} = G\gen{a}$ be the semidirect product, and let $\hat{S} = S\gen{a}$.
For each subgroup $X$ of $\hat{S}$, 
denote by $J(X)$ the Thompson subgroup of $X$ generated by the abelian subgroups of $X$ of
maximum order. Set $\hat{D} = Z(J(\hat{S}))$ and $D = \hat{D} \cap S$ for
short. Then $\hat{S}$ is Sylow in $\hat{G}$ and $\gen{a} \leq Z(\hat{S}) \leq
\hat{D}$.  So $\hat{X} = X \times \gen{a}$ for $X \in \{S,D\}$. 

Let $\hat{H} = N_{\hat{G}}(J(\hat{S}))$ in which $\hat{S}$ is Sylow, and let
$n$ be the index of $\hat{S}$ in $\hat{H}$. As $\hat{G}/G$ is abelian and
$\hat{D}$ is normal in $\hat{H}$,
\[
[\hat{D}, \hat{H}] \leq G \cap \hat{D} = D,
\]
so
\begin{eqnarray}
\label{HcentDD}
\text{$\hat{H}$ centralizes $\hat{D}/D$.}
\end{eqnarray}

Consider the norm/transfer/trace map
\[
\N = \N_{\hat{S}}^{\hat{H}} \colon C_{\hat{D}}(\hat{S}) \to C_{\hat{D}}(\hat{H})
\]
defined by setting
\[
\N(d) = \prod_{h \in [\hat{H}/\hat{S}]} d^{h}.
\]
By \eqref{HcentDD},
\[
\N(a) \equiv a^{n} \pmod{D}. 
\]
Since $|a|$ is coprime to $n$, the restriction $\N_{\gen{a}}$ is injective, and
we may choose $m \geq 1$ with $\N(a^m) \equiv a \pmod{D}$.  Thus we may find $z
\in D$ such that $az = \N(a^m) \in C_{\hat{D}}(\hat{H})$. Since $az \in
C_{\hat{D}}(\hat{H}) \leq Z(\hat{S})$ and $a \in Z(\hat{S})$, we see that $z
\in Z(\hat{S}) \cap G = Z(S)$.

Set $a_1 = az$; it has order $|a|$. By construction $a_1$ is weakly closed in
$\hat{S}$ with respect to $\hat{H}$. Since $p$ is odd, $a_1$ is weakly closed
in $\hat{S}$ with respect to $\hat{G}$ by \cite[Theorem~14.4]{Gla71}.

Let $b_1$ be a power of $a_1$ having order $p$.  Lemma~\ref{wc} gives that
$b_1$ is also weakly closed in $\hat{S}$ with respect to $\hat{G}$. So the
assumption $O_{p'}(G) = 1$ and the $Z^*_p$-theorem yield
$b_1 \in Z(\hat{G})$. This shows that conjugation by $a_1$ induces an
automorphism of $G$ of order at most $|a|/p$ centralizing $S$, and so
conjugation by $a_1$ is inner by induction. It follows that $a$ is inner.
\end{proof}

\section{Almost Simple Groups}

The following two theorems about simple and quasi-simple groups
	provide the key to give a second proof of our main results.   The first is
	a result of Gross \cite{Gr}, while the other is new and may be of independent
	interest.   Both results depend upon the classification of finite simple groups.

\begin{thm}  \label{simple}  
Let $p$ be an odd prime, and suppose $L$ is a finite
	nonabelian simple group with order divisible
	by $p$. Also, let $a$ be an automorphism of
	$L$ that has $p$-power order, and assume that
	$a$ centralizes a Sylow $p$-subgroup of $L$. Then
	$a$ is an inner automorphism of $L$.
\end{thm}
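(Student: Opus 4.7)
The plan is to invoke the classification of finite simple groups and treat the families in turn. The sporadic and alternating cases are immediate for $p$ odd: every sporadic simple group has outer automorphism group of order at most $2$, and $\Out(A_n)$ has order $2$ (or $4$ when $n=6$), so every automorphism of $p$-power order of such a group is already inner and the conclusion is trivial.

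The essential case is $L = L(q)$ of Lie type. Using the Steinberg description of $\Aut(L)$, I would write $a$, modulo inner automorphisms, as a product of an inner-diagonal part $d$ and a field or graph-field part $a_0$; as recalled in the introduction, for $p \ge 5$ the only inner-diagonal contributions come from type $A$, while for $p = 3$ one must also handle triality in $\POmega_8^+(q)$ and a few further diagonal families. For the field/graph-field part, the argument is an order comparison: if $a_0$ has $p$-power order, then $q = q_0^{p^k}$ for some $k \ge 1$, and the fixed point subgroup $C_L(a_0)$ is itself a group of Lie type over $\FF_{q_0}$ (possibly twisted by the graph part). Using the standard order formulae for groups of Lie type together with lifting-the-exponent to control the relevant cyclotomic factors, the $p$-part of $|C_L(a_0)|$ is strictly smaller than the $p$-part of $|L|$, whether or not $p$ is the defining characteristic, so $a_0$ cannot centralize a Sylow $p$-subgroup of $L$. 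This disposes of all purely field and graph-field automorphisms.

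The principal obstacle is the residual inner-diagonal case, where $C_L(a)$ can equal $L$ and the order argument collapses. For the type $A$ situation with $p \mid \gcd(n, q \mp 1)$, the approach is to lift a Sylow $p$-subgroup $P$ of $L$ to the simply connected cover $\SL_n(q)$ or $\SU_n(q)$ and compute the centralizer of this lift in $\GL_n(q)$ or $\GU_n(q)$. The key claim is that this centralizer is contained in the product of the simply connected group with the scalar centre, so that any diagonal element centralizing $P$ already represents an inner automorphism of $L$; I would verify this via an explicit description of Sylow $p$-subgroups of $\PGL_n(q)$ and $\PGU_n(q)$ in terms of cyclic tori and a direct analysis of their centralizers. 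The triality contribution at $p = 3$ for $\POmega_8^+(q)$ is then handled by a dedicated order comparison—the fixed-point subgroup of a triality is of type $G_2(q)$ or $\tw{3}D_4(q_0)$ with $q = q_0^3$, whose $3$-part is strictly smaller than that of $\POmega_8^+(q)$—while the remaining low-rank diagonal families at $p = 3$ (such as in $E_6(q)$ and $\tw{2}E_6(q)$) are dispatched individually by the same lift-and-centralize strategy.
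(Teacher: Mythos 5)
This statement is not proved in the paper at all: it is quoted verbatim as a theorem of Gross and cited to \cite{Gr} (``Automorphisms which centralize a Sylow $p$-subgroup,'' J.\ Algebra 77 (1982)), so there is no in-paper proof to compare against. Your outline is, in spirit, a reasonable reconstruction of the standard CFSG-based strategy that Gross actually carries out, and your ``lift to the cover and analyse centralizers of Sylow $p$-subgroups built from cyclic tori'' step closely parallels what the paper does prove for the related quasisimple statement (Theorem \ref{quasisimple}, via the wreath-product computation in Lemma \ref{wreathlemma}). The alternating/sporadic reduction and the order comparison for genuine field and graph(-field) automorphisms are both correct as stated.

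There is, however, a genuine gap in the case division. After factoring out $\Inn(L)$, the image of $a$ in $\Out(L)$ need not be purely ``inner-diagonal'' or purely ``field/graph-field'': the coset $\mathrm{Inndiag}(L)\varphi$ of a field automorphism $\varphi$ of order $p$ generally contains $p$-elements that are \emph{not} $\Aut(L)$-conjugate to any field automorphism (e.g.\ for $L=\PSL_3(q_0^3)$ with $3\mid q_0-1$, the classes $d\varphi$ with $d$ a nontrivial diagonal automorphism). Your order comparison is justified only for $C_L(a_0)$ with $a_0$ a standard field or graph-field automorphism, and your lift-and-centralize argument only for elements of $\mathrm{Inndiag}(L)$; neither covers these mixed elements. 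Handling them requires a Lang--Steinberg type identification of $C_L(d\varphi)$ (it is again a form of the group over $\FF_{q_0}$, possibly enlarged by diagonal automorphisms) before the $p$-part comparison can be run, and this is where a substantial portion of Gross's actual work lies. Separately, the ``key claim'' in the diagonal case --- that the centralizer in $\GL_d(q)$ (resp.\ $\GU_d(q)$) of a suitable Sylow $p$-subgroup lies in $\SL_d(q)Z$ (resp.\ $\SU_d(q)Z$) --- is the technical heart of that case and is asserted rather than proved; it is exactly the content of the torus-normalizer analysis in Section 4 of the paper. So the proposal is a sound roadmap but not yet a proof.
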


\begin{thm} \label{quasisimple}  Let $p$ be an odd prime and suppose
that $Q$ is a finite quasisimple group with center $Z$.    Let $P$ be a Sylow $p$-subgroup of $Q$
and let $x \in Z(P)$.   Let $H$ be the largest subgroup of $\aut{Q}$ with $[H,x] \le Z$ (i.e. 
$H=C_{\aut Q} (xZ/Z)$).   Then there exists $y \in xZ$ a $p$-element such that $H$ centralizes $y$.
\end{thm}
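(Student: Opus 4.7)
The aim is to show that the $1$-cocycle $\phi\colon H\to Z$ defined by
$h(x)=x\phi(h)$ is a coboundary: once $\phi(h)=zh(z)^{-1}$ for some
$z\in Z_p$ and all $h\in H$, the element $y:=xz$ is a $p$-element of $xZ$
centralized by $H$.

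First, I would reduce to the case $Z=Z_p$ a $p$-group.  Since $x\in Z(P)\le P$
is a $p$-element and $h(x)$ must also be a $p$-element in $xZ$ for every
$h\in H$, we have $h(x)\in xZ_p$; passing from $Q$ to the quasisimple group
$Q/Z_{p'}$ (whose center is $Z_p$), I may assume $Z=Z_p$.  Since $P\le
C_Q(x)$, the index $[Q:C_Q(x)]$ is coprime to $p$, so Lemma~\ref{p-central}
gives $x^Q\cap xZ=\{x\}$. If conjugation by some $q\in Q$ lies in $H$, then
$q^{-1}xq\in xZ$ forces $q\in C_Q(x)$. Hence $\phi$ vanishes on $H\cap\Inn(Q)$;
and because $\Inn(Q)$ centralizes the central group $Z$, each element of
$H\cap\Inn(Q)$ in fact fixes $xZ$ pointwise.

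Next I would apply Lemma~\ref{gab-marty} with $L=H$, $M=\langle x\rangle Z\le
Z(P)$ (an $H$-invariant abelian $p$-subgroup), and $N=Z$.  The quotient
$M/N\cong\langle xZ\rangle$ is a cyclic $p$-group on which $H$ acts trivially
by the definition of $H$, so Lemma~\ref{gab-marty} yields an $H$-fixed point
in $xZ$ as soon as some Sylow $p$-subgroup $T$ of $H$ has a fixed point in
$xZ$.  By the inner-automorphism vanishing just observed, this in turn
reduces to finding a fixed point on $xZ$ for a Sylow $p$-subgroup $\bar T$
of the image $\bar H=H\Inn(Q)/\Inn(Q)$ in $\Out(Q)$.

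The last step is where the classification of finite simple groups enters.
For $p$ odd, outer automorphisms of $p$-power order of a quasisimple group
arise only in the Lie type case, and by \cite[2.5]{GLS} they are products of
diagonal (type $A$ for $p\ge 5$; more types for $p=3$), field, and graph
(only triality in $D_4$ for $p=3$) automorphisms. One then analyses $\bar T$
case by case, using the explicit action on $Z$, to verify that the cocycle
$\phi|_{\bar T}$ is a coboundary. The key tool is a Hilbert~90-type identity:
for cyclic $\bar T=\langle\sigma\rangle$, the cocycle equation
$\phi(\sigma^{|\sigma|})=1$ expresses precisely that the norm of
$\phi(\sigma)$ is trivial, which for a cyclic action on an abelian $p$-group
forces $\phi(\sigma)\in(\sigma-1)Z_p$.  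This classification-dependent case
analysis---in particular the joint diagonal-plus-field contribution in type
$A_\ell(q)$, where $\bar T$ need not be cyclic, and the triality case in
$D_4(q)$ at $p=3$---is the main obstacle; the preceding reductions are
essentially structural and rely only on Lemmas~\ref{p-central} and
\ref{gab-marty}.
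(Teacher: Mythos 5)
Your structural reductions coincide with the paper's own: the passage to $Z$ a $p$-group, the use of Lemma~\ref{p-central} to dispose of $H\cap\Inn(Q)$, and Lemma~\ref{gab-marty} to reduce to a Sylow $p$-subgroup of $H$ are all exactly what the paper does, and your observation that the affine action on $xZ$ factors through the image of $H$ in $\Out(Q)$ is a clean packaging of the inner case. The gap is in the final step, which is where essentially all of the content lives, and the tool you propose for it is incorrect. The claim that for cyclic $\bar T=\langle\sigma\rangle$ the vanishing of the norm of $\phi(\sigma)$ forces $\phi(\sigma)\in(\sigma-1)Z$ is precisely the assertion $H^1(\langle\sigma\rangle,Z)=0$, which is false for a cyclic $p$-group acting on a finite abelian $p$-group in general and false in the case at hand: for $\PSL_d(q)$ with $q=q_0^{p^a}$ the field automorphism acts on the cyclic $p$-group $Z$ by $z\mapsto z^{q_0}$ with $q_0\equiv 1\pmod p$, and this action can be trivial, in which case $\ker N\supsetneq \operatorname{im}(\sigma-1)=0$. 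So no Hilbert-90-type vanishing closes even the cyclic case, let alone the non-cyclic one you defer as ``the main obstacle.''

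What actually forces the cocycle to be a coboundary is not cohomology of $\bar T$ on $Z$ but the hypothesis $x\in Z(P)$ for a \emph{full} Sylow $p$-subgroup $P$ of $Q$, and your reduction to $\Out(Q)$ discards exactly this. Concretely, in $\SL_3(q)$ with $q=q_0^3$, $q_0\equiv 1\pmod 3$, one can choose a diagonal $x$ (centralized by all diagonal automorphisms) with $[x,\sigma]$ a nontrivial element of $Z$ such that no element of $xZ$ is $\sigma$-fixed; such $x$ are only excluded because they fail to be centralized modulo $Z$ by the Weyl-group part of $P$. The paper's proof places $x$ in the diagonal torus, records the constraint that a Sylow $p$-subgroup $W_1$ of $S_d$ centralizes $x$ modulo $Z$ (so the coordinates of $x$ are constant on $W_1$-orbits up to a controlled error), and then proves the explicit module computation Lemma~\ref{wreathlemma} for $C^d$ under $S_d\times\langle\sigma\rangle$, showing that for such $x$ the condition $[x,\sigma]\in Z$ already yields $x\in Z+C(\sigma)$; diagonal automorphisms come for free since they centralize the torus. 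To repair your plan you must reinstate the Weyl-group constraint and prove something equivalent to Lemma~\ref{wreathlemma}. (Also, the only cases needing any of this are $\PSL_d(q)$, $\PSU_d(q)$ and $E_6(q)$, ${}^2E_6(q)$ at $p=3$, the latter being dispatched because there $Z(P)=Z$; triality never arises since $3$ does not divide the Schur multiplier of $D_4(q)$.)
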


This gives the following corollary which is used in the second  proof.   The corollary
is an immediate consequence of the theorem by noting that an automorphism $\sigma$ of $Q$ 
commutes with conjugation by $x$ if and only $[\sigma, x ] \in Z$. 

\begin{cor} \label{quasi-simple}   Let $Q$ be a quasi-simple group with center $Z$ a $p$-group.
Suppose that $x \in Z(P)$ with $P$ a Sylow $p$-subgroup of $Q$.    There exists $y \in xZ$ such
that  if $\sigma$ is an automorphism
of $Q$ that commutes with conjugation by $x$, then
$\sigma$ fixes $y$. 
 \end{cor}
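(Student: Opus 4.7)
The plan is to reformulate the existence of $y$ cohomologically, use Lemma~\ref{p-central} to kill the inner part of $H$, and then finish by inspection of the $p$-part of $\Out(Q)$.

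First, for any $\sigma \in H$ the element $\sigma(x)$ is a $p$-element in the coset $xZ$, and since $x$ and $\sigma(x)$ commute (their ratio lies in $Z$), writing $\sigma(x) = x z_\sigma$ with $z_\sigma \in Z$ forces $z_\sigma$ to be a $p$-element of $Z$, hence $z_\sigma \in Z_p$, the Sylow $p$-subgroup of $Z$. Thus the $p$-elements of $xZ$ are precisely the coset $xZ_p$, and $\phi\colon H \to Z_p$ defined by $\phi(\sigma) := z_\sigma$ is a $1$-cocycle for the natural action of $H$ on $Z_p$. A candidate $y = xc$ with $c \in Z_p$ is centralized by every $\sigma \in H$ exactly when $\phi(\sigma) = c\,\sigma(c)^{-1}$ for all $\sigma$, i.e., when $\phi$ is a $1$-coboundary. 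The theorem is therefore equivalent to the vanishing of $[\phi] \in H^1(H, Z_p)$.

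Next, I would use Lemma~\ref{p-central} to pass from $H$ to its image in $\Out(Q)$. Since $x \in Z(P)$ and $P$ is a Sylow $p$-subgroup of $Q$, the index $[Q:C_Q(x)]$ is coprime to $p$, and the lemma, applied to $x$ together with the central $p$-subgroup $Z_p$, gives $x^Q \cap xZ_p = \{x\}$. Hence every $g \in Q$ with $[g,x] \in Z$ actually centralizes $x$, so $\phi$ vanishes on $H \cap \Inn(Q)$ and descends to a cocycle $\bar\phi\colon \bar H \to Z_p$, where $\bar H := H/(H\cap\Inn(Q)) \hookrightarrow \Out(Q)$. Choosing a Sylow $p$-subgroup $\bar H_p$ of $\bar H$, the index $[\bar H:\bar H_p]$ is coprime to $|Z_p|$, so the standard restriction--corestriction argument gives an injection $H^1(\bar H, Z_p) \hookrightarrow H^1(\bar H_p, Z_p)$, and it suffices to show $\bar\phi|_{\bar H_p}$ is a $1$-coboundary.

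Finally, I would use the structure of $\Out(Q)_p$ for $Q$ quasisimple with $p$ odd, as recalled in the introduction from \cite[2.5]{GLS}: the group $\bar H_p$ is generated by field automorphisms and, in type $A$, diagonal automorphisms, with triality contributing only in the special $p=3$ cases. Diagonal automorphisms act trivially on $Z$, so they give $z_\sigma = 1$ and contribute nothing to $\bar\phi$. The remaining work --- and the main obstacle --- is to verify the coboundary condition for field and triality automorphisms: one must exhibit a common $c \in Z_p$ with $z_\sigma = c\,\sigma(c)^{-1}$ for all such generators $\sigma$ of $\bar H_p$. This is settled case by case, using explicit models for the Schur multiplier of $Q/Z$ and the known action of the outer automorphisms on it.
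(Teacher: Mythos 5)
In the paper this corollary is deduced in one line from Theorem \ref{quasisimple} (an automorphism $\sigma$ commutes with conjugation by $x$ iff $[\sigma,x]\le Z$, so the group of such $\sigma$ is exactly the $H$ of that theorem), so what you have really written is a blind attempt at Theorem \ref{quasisimple} itself. Your reductions do track the paper's: the cohomological reformulation ($\phi(\sigma)=x^{-1}\sigma(x)$ a $1$-cocycle, existence of $y$ equivalent to $[\phi]=0$ in $H^1(H,Z_p)$) is correct, your restriction--corestriction step is precisely the cohomological form of the averaging argument in Lemma \ref{gab-marty}, and your use of Lemma \ref{p-central} to show $\phi$ vanishes on $H\cap\Inn(Q)$ (valid because $x\in Z(P)$ gives $p\nmid |Q:C_Q(x)|$) is exactly how the paper kills the inner part. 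Up to that point the argument is sound.

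The gap is that the last step, which you yourself flag as ``the main obstacle,'' is where essentially all of the content of the theorem lives, and you do not carry it out. After the reductions one is left with $L=\PSL_d(q)$ or $\PSU_d(q)$ with $p\mid (d,q\mp 1)$ (plus $E_6$, ${}^2E_6$ for $p=3$, which the paper dispatches by quoting L\"ubeck to get $Z(P)=Z$), and one must show that for a field automorphism $\sigma$ of $p$-power order with $[\sigma,x]\le Z$ there is a central correction making $x$ genuinely $\sigma$-fixed. The paper does this by locating $x$ in a split maximal torus $T$ with $P\le N(T)$, and then proving Lemma \ref{wreathlemma}, an explicit computation in $C^d$ under the action of $\Syl_p(S_d)\times\langle\sigma\rangle$ showing that $\{x\in M_1\mid [x,\sigma]\in Z_0\}=Z_0+C_{M_1}(\sigma)$ --- which is exactly the coboundary statement you need, in concrete form. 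Asserting that this ``is settled case by case, using explicit models for the Schur multiplier'' is not a proof of it. A secondary point: your claim that diagonal automorphisms contribute nothing ``because they act trivially on $Z$'' is a non sequitur --- acting trivially on $Z$ says nothing about whether $\sigma(x)=x$. What is actually true (and what the paper uses) is that after the torus normalization the diagonal automorphisms are induced by the torus of $\PGL_d(q)$, which centralizes $T\ni x$; that requires the same setup you have omitted.
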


The remainder of this section is devoted to proving Theorem \ref{quasisimple}.    We
are assuming that Theorem \ref{simple} holds.

We first note:

\begin{lemma}  Let $Q$ be a quasisimple group.   If $p$ does not divide
$|Z(Q)|$ or $p$ does not divide $|\Out(Q)|$, then Theorem \ref{quasisimple} holds for $Q$.
\end{lemma}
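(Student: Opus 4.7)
The plan is to recast the problem as a $1$-cocycle question for the $H$-action on the coset $xZ$ and then dispose of the two cases separately. Set $Z_p := O_p(Z)$.

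First I would record some preliminaries common to both cases. Since $x \in Z(P)$ is a $p$-element and $Z$ centralizes $x$, every element $xz \in xZ$ splits as the product of the commuting $p$- and $p'$-parts $(xz_p)z_{p'}$, where $z = z_p z_{p'}$. Hence the $p$-elements of $xZ$ form exactly the subcoset $xZ_p$, so any desired $y$ has the form $y = xz_0$ with $z_0 \in Z_p$. For $\sigma \in H$, define $z(\sigma) := x^{-1}\sigma(x) \in Z$; since $\sigma(x)$ is a $p$-element of $xZ$, the observation forces $z(\sigma) \in Z_p$. Applying $\sigma$ to $\tau(x) = xz(\tau)$ yields the $1$-cocycle identity $z(\sigma\tau) = z(\sigma)\sigma(z(\tau))$, and the condition $\sigma(y) = y$ rearranges to the coboundary relation $z(\sigma) = z_0 \sigma(z_0)^{-1}$.

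In the first case $p \nmid |Z|$, we have $Z_p = 1$, hence $z \equiv 1$ and $y := x$ works trivially.

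For the second case $p \nmid |\Out(Q)|$, let $K := H \cap \Inn(Q)$, which is normal in $H$; the natural map $H \to \Out(Q)$ has kernel $K$, so $k := |H/K|$ is coprime to $p$. For $\sigma \in K$, write $\sigma$ as conjugation by some $g \in Q$; then $gxg^{-1} = xz(\sigma) \in xZ_p$. Since $x \in Z(P)$ implies $|Q : C_Q(x)|$ is coprime to $p$, Lemma~\ref{p-central} applied with the central $p$-group $Z_p$ forces $gxg^{-1} = x$, so $z|_K \equiv 1$. Because $K$ acts trivially on the central subgroup $Z_p$, the $H$-action on $Z_p$ factors through $H/K$, and since $k$ is a unit modulo $|Z_p|$ I would define
\[
z_0 := \Bigl(\prod_{i=1}^k z(h_i)\Bigr)^{1/k} \in Z_p
\]
for any left transversal $h_1, \ldots, h_k$ of $K$ in $H$, where $1/k$ denotes the inverse of $k$ modulo $|Z_p|$. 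Independence of the transversal follows from $z|_K \equiv 1$, and a direct computation using the cocycle identity together with the fact that left multiplication by $\sigma$ permutes the cosets $h_iK$ confirms $\sigma(z_0) = z_0 \cdot z(\sigma)^{-1}$. Hence $y := xz_0$ is a $p$-element of $xZ$ centralized by all of $H$, as required.

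The main subtlety is ensuring the coboundary $z_0$ comes from the $p$-part $Z_p$ rather than from all of $Z$; this is automatic, as both the values of $z$ and the averaging process stay inside $Z_p$---the former by the $p$-element observation, the latter because $Z_p$ is characteristic in $Q$ and hence $H$-invariant.
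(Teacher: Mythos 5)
Your proof is correct and follows essentially the same route as the paper: both arguments rest on Lemma~\ref{p-central} to show that the inner automorphisms in $H$ already fix $x$ (using that $P\le C_Q(x)$), combined with an averaging argument over a subgroup of $p'$-index to produce the $H$-fixed element of $xZ$. The only difference is packaging --- the paper invokes Lemma~\ref{gab-marty} to reduce to a Sylow $p$-subgroup of $H$ (which is inner because $p\nmid|\Out(Q)|$) and passes to $Q/O_{p'}(Z)$ at the outset, whereas you inline that averaging as an explicit $1$-cocycle/coboundary computation over $H/(H\cap\Inn(Q))$ while tracking $O_p(Z)$ directly.
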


\begin{proof}   Suppose that $Z=Z(Q)$ is a $p'$-group.   Then $\langle x \rangle$
is the Sylow $p$-subgroup of $\langle x, Z \rangle$ and so $[H,x] \le Z$ implies that
$[H,x]=1$.   Indeed, the same argument shows that by passing to $Q/O_{p'}(Z)$, we 
may assume that $Z$ is a nontrivial $p$-group.  

By Lemma \ref{gab-marty}, it suffices to prove the result for a Sylow $p$-subgroup 
$R$ of $H$.    If $p$ does not
divide $|\Out(Q)|$, then $R$ induces inner automorphisms on $Q$.   By Lemma \ref{p-central},
it follows that $R$ centralizes $x$.
\end{proof}

The previous  result shows that Theorem \ref{quasisimple} holds when $Q/Z$ is an alternating
or sporadic group (since the outer automorphism group has order $1,2$ or $4$ \cite[5.2.1, Table 5.3]{GLS}).
Thus we may assume that $L:=Q/Z$ is a finite group of Lie type and moreover that 
$Z$ is a nontrivial $p$-group. 

If the characteristic of $L$ is $p$, then almost always the Schur multiplier has order prime to $p$.
Using \cite{GLS}
it is straightforward to check in the few cases where $p$ does divide the order
of the Schur multiplier, $p$ does not divide the order of the
outer automorphism group, whence Theorem \ref{quasisimple} holds. 

Thus, we may assume that $L$ is a finite group of Lie type in characteristic $r \ne p$.
By \cite{GLS}, the only $L$ such that $p$ divides both the order of the Schur multiplier and the
outer automorphism group are:

\begin{enumerate}
\item $L=\PSL(d,q)$ and $p$ divides $(d, q-1)$; or
\item $L=\PSU(d,q)$ and $p$ divides $(d,q+1)$; or
\item  $p=3$ and $L=E_6(q)$ and $3$ divides $q-1$ or 
$L={^2}E_6(q)$ and $3$ divides $q+1$.
\end{enumerate}

Note that in all cases the Schur multiplier of $L$ is cyclic and so $Z$ is cyclic.   
In the last case above,  $Z(P)=Z$ by \cite{Lu} whence the theorem holds in that case.

We next prove an elementary result that is the key to proving Theorem \ref{quasisimple}.
The statement of the result is almost as long as the proof. 

\begin{lemma} \label{wreathlemma}  Let $c$ and $d$ be positive integers. 
Let $p$ be an odd prime and let $C$ be a cyclic group of order $p^c$.   Let $e$ be a positive integer
with $e \equiv 1 \pmod p$.   
Set $M=C^d$ (the direct sum of $d$ copies of $C$) and view $M$ as a module for 
$S_d \times \langle \sigma \rangle$ where $S_d$ acts on $M$ by permuting the coordinates of $M$
and $\sigma(x)=ex$ for all $x \in M$.   Let $\epsilon: M \rightarrow C$ be the augmentation map
(i.e. the sum of the coordinates) and $M_0 = \ker{\epsilon}$.   Let $Z$ be the group of fixed points of $S_d$
on $M$ and set $Z_0= Z \cap M_0$.   Let $Q$ be a Sylow $p$-subgroup of $S_d$.  
Let $M_1 = \{x \in M_0 | [x,Q] \le Z_0\}$.   Then 
$$
\{x \in M_1 | [x, \sigma] \in Z_0\} = Z_0 + C_{M_1}(\sigma).
$$
\end{lemma}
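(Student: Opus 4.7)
The inclusion $Z_0 + C_{M_1}(\sigma) \subseteq \{x \in M_1 : [x,\sigma] \in Z_0\}$ is immediate, so I will prove the reverse inclusion. Write $e - 1 = p^a u$ with $u$ coprime to $p$ (so $a \geq 1$ by hypothesis) and let $b = \min(v_p(d), c)$, so that $Z_0 = \{(t,\ldots,t) : t \in p^{c-b}C\}$. The plan is, for each $x \in M_1$ satisfying $(e-1)x \in Z_0$, to exhibit $z \in Z_0$ such that $y := x - z$ lies in $p^{c-a}M$; then $(e-1)y = 0$ forces $y \in C_{M_1}(\sigma)$ and yields the decomposition $x = z + y$.

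I would begin by analyzing the structure of $M_1$ via the $Q$-orbit decomposition on $\{1,\ldots,d\}$. For any $p$-cycle $q \in Q$ whose support $S$ is a proper subset of $\{1,\ldots,d\}$, the difference $q(x) - x$ is supported in $S$, so the requirement that it be a constant vector on all of $M$ forces $q(x) - x = 0$, whence $x$ is constant on $S$. Iterating this through the wreath-product structure of Sylow subgroups of symmetric groups, I conclude: either (i) $Q$ is transitive on $\{1,\ldots,d\}$ (so $d = p^b$), and $x$ is constant on each of the $p$ sub-blocks of size $p^{b-1}$ with block values forming an arithmetic progression $y_i = y_1 - (i-1)\delta$, $p\delta = 0$; or (ii) $Q$ has more than one orbit, and $x$ is constant on each orbit. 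Writing $w_O$ for the orbit value (taken to be $y_1$ in case (i)), the condition $\sum x_i = 0$ becomes $\sum_O p^{k_O} w_O = 0$, where $k_O$ is the orbit-size exponent (the $\delta$-contribution in case (i) vanishes because $p$ odd makes $\binom{p}{2}$ divisible by $p$), and $(e-1)x \in Z_0$ becomes the congruence $w_O \equiv w_{O'} \pmod{p^{c-a}}$ for all orbits.

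The heart of the argument is the claim that $w_{O_0} \in p^{c-a}C + p^{c-b}C$ for some (equivalently every) orbit $O_0$. In case (i), the sum condition directly gives $dw_{O_0} = 0$, hence $w_{O_0} \in p^{c-b}C$. In case (ii), I would write $w_O = w_{O_0} + p^{c-a}s_O$ using the common-residue condition and substitute into $\sum p^{k_O}w_O = 0$ to obtain $dw_{O_0} + p^{c-a}U = 0$, where $U := \sum p^{k_O}s_O \in p^bC$ since $k_O \geq b$ for every orbit. Writing $d = p^b u'$ with $u'$ a unit and $U = p^b U'$, and dividing out $p^b$, the equation reduces modulo $p^{c-b}$ to $u' w_{O_0} \equiv -p^{c-a}U' \pmod{p^{c-b}}$. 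Since $u'$ is a unit, this places $w_{O_0}$ in $p^{c-a}C + p^{c-b}C$.

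To finish, I would choose $w \in p^{c-b}C$ with $w \equiv w_{O_0} \pmod{p^{c-a}}$ (possible by the previous step) and set $z := (w,\ldots,w)$. Then $dw \in p^b \cdot p^{c-b}C = 0$ shows $z \in Z_0$, and $y := x - z$ lies in $M_1$ because $M_1$ is a subgroup of $M_0$ containing $Z_0$. Each coordinate $x_i$ is congruent to its orbit value $w_O$ modulo $p^{c-a}C$ (using $\delta \in p^{c-1}C \subseteq p^{c-a}C$ in case (i)), and $w_O \equiv w_{O_0} \equiv w \pmod{p^{c-a}}$, so $y \in p^{c-a}M$ as required. The hard part of the plan is the key divisibility step in case (ii): it extracts the extra factor of $p^b$ by combining the weighted-sum equation with the common-residue congruence, whereas neither condition suffices on its own.
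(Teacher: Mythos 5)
Your proof is correct and follows essentially the same route as the paper's: the same case split on whether $Q$ is transitive, the same structural description of $M_1$ (constancy on $Q$-orbits, resp.\ the arithmetic-progression block form, using $p \mid \binom{p}{2}$ for $p$ odd), and the same concluding arithmetic in the cyclic group $C$. The only cosmetic difference is that you package the key step uniformly via valuations (showing $w_{O_0} \in p^{c-a}C + p^{c-b}C$ and splitting accordingly), whereas the paper splits into the cases $dw = 0$ and $dw \neq 0$ and argues in the latter that $w$ and $t$ generate the same subgroup of $C$ so that $(e-1)w = 0$; both amount to the same computation.
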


\begin{proof}  Let $q=p^b$ be the largest power of $p$ dividing $d$.
If $q=1$, then $M = M_0 \oplus Z$ and the result is clear.  So assume that $q > 1$.

Note that if $[x, \sigma] \in Z$, then $x = (x_1, \ldots, x_d)$ where $x_i=w + s_i$ with $w, s_i \in C$
and  $(e-1)s_i=0$.  

First suppose that $q \ne d$.  Thus, $Q$ has more than one orbit and so we see
that $M_1$ consists of those elements in $M_0$ in which the coordinates are constant
on each orbit of $Q$.  Thus, if $x \in M_1$ and $[\sigma, x] \in Z_0$, it follows that
$x = (x_1, \ldots, x_d)$ where $x_i=w + s_i$ and each $s_i$ occurs a multiple of $q$ times
(since the coordinate is constant on each orbit of $Q$.   Thus
$ dw = - qt$ where $t$ is in the subgroup generated by the $s_i$ and in particular $(e-1)t = 0$.
Suppose that $dw \ne 0$.   Then since $q$ is the largest power of $p$ dividing $d$, it follows that
$w$ and $t$ generate the same subgroup of $C$ and so $(e-1)w=0$ and so $w$ and $x$ are centralized
by $\sigma$ and the result holds.   If $dw=0$, then $x =(w, \ldots, w) + (s_1, \ldots, s_d) \in Z_0 + C_{M_1}(\sigma)$.

Finally suppose that $q=d$.   
Then $Q$ permutes $p$ blocks of imprimitivity (possibly $q=p$) of size $q/p$ (we take the blocks
to consist of consecutive integers).    Let $Q_1$ be the subgroup
of index $p$ fixing each block.   Let $x \in M$. 
Then $[x, Q_1] \le  Z$ implies that all coordinates of $x$ on each block
are constant.  So write $x=(x_1, \ldots, x_p)$ where $x_i$ is a constant vector corresponding to the $i$th block.   Let 
$\rho \in Q$ be of order $p$ permuting the blocks.   We assume
that $\rho$ takes $(x_1, \ldots, x_p)$ to $(x_2, \ldots, x_p, x_1)$.
  Then $[x, \rho] \in Z$ implies that
$x=(y, \ldots, y) + (0, u, 2u, \ldots, (p-1)u)$ with $pu=0$.   Let $W=\{x \in M |[x,Q] \le Z\}$.  So we have shown
that $W = Z \oplus Z'$ where $Z'$ is the subgroup of order $p$ generated by  $(0, u, 2u, \ldots, (p-1)u)$
with $u$ any element of order $p$.  Note that $|Z'|=p$ and moreover $Z' \le M_0$ (since $p$ is odd)
and is centralized by $\sigma$.    
Thus $ M_1 = W \cap M_0 = Z_0 \oplus Z' = Z_0 + C_{M_1}(\sigma)$ and the result follows. 
\end{proof}

We now give the proof of Theorem \ref{quasisimple} 
 in the  case that $L=\PSL(d,q)$  with $p$ dividing $(d,q-1)$ and $q=r^e$.
The proof is identical for $L=\PSU(d,q)$ (with $p$ dividing $(d,q+1)$ -- using the fact that
$p$ is odd).   The idea is to reduce to working in the normalizer of a maximal torus and
then the result essentially follows by Lemma \ref{wreathlemma}. 

Let $R =\SL(d,q)$ with center $Z_2$.    Then we can take $Q=R/Z_1$ for some 
subgroup $Z_1 \le Z_2$ such that  $Z_2/Z_1=Z$ is a nontrivial $p$-group.

We actually prove a bit more than we require by working in $R$ rather than
$R/Z_1$.     Let $T$ be the diagonal subgroup of $R$
and let $P$ be a Sylow $p$-subgroup of $R$ contained in the normalizer of $T$.   
Note that the normalizer of $T$ is just $TS_d$ and we can take $P \le TW_1$ with
$W_1$ a Sylow $p$-subgroup of $A_d$. 
Let $x \in P$  with $[P,x] \le Z_2$.   It is straightforward to see that $x \in T$.

Let $H$ be the subgroup of $\aut{R}$ such $[H,x] \le Z_2$
and let $W$ be a Sylow $p$-subgroup of $H$.     Note that we can take 
$W$ so that $W = S(W_1 \times \langle \sigma \rangle)$ where $S$ is the Sylow $p$-subgroup of $T$
in $\PGL_d(q)$ (i.e. the corresponding split torus in $\PGL$, which in particular centralizes $T$), 
$W_1$ is as above and $\sigma$ is a standard Frobenius automorphism (of $p$-power order).   

By Lemma \ref{wreathlemma},  we can find $y \in xZ_2$ so that $\sigma$ centralizes $y$ 
and so replacing $x$ by $y$ we may assume that $\sigma$ centralizes $x$.  Thus, 
$[P,x] = [W,x]$.  In particular if $xZ_1$ is central in  $P/Z_1$, then $W$ centralizes $xZ_1$, whence
by an averaging argument (Lemma \ref{gab-marty}), $H$ centralizes $xZ_1$ as required.

This completes the proof of Theorem \ref{quasisimple}.

\section{Second Proof of the Theorem and Proof of the Corollary}

 \begin{thm}
Suppose that
$p$ is an odd prime and $L$ is a finite group with $\oh{p'} L=1$.
Suppose that $a \in \aut L$ has order a power of $p$
and $a$ centralizes a Sylow $p$-subgroup of $L$.
Then $a$ acts as an inner automorphism of $L$.
\end{thm}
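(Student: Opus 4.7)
The plan is to form the semidirect product $G = L \rtimes \langle a\rangle$, so that $S := T\langle a\rangle$ is a Sylow $p$-subgroup of $G$ with $a \in Z(S)$, where $T$ is a Sylow $p$-subgroup of $L$ centralized by $a$. Since $O_{p'}(L)=1$, one has $F^*(L) = E(L)O_p(L)$; the case $E(L) = 1$ is handled directly by Corollary~\ref{2.3}, so I would assume $E(L) \neq 1$. Moreover, each $Z(Q_i)$ is a $p$-group (its $p'$-part, summed over an $L$-orbit of components, is a normal $p'$-subgroup of $L$), and each $|Q_i|$ is divisible by $p$ (a component of $p'$-order would similarly give a nontrivial normal $p'$-subgroup). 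Combined with the standard fact that the primes dividing the Schur multiplier of a finite simple group divide the group's order, this yields $p \mid |Q_i/Z(Q_i)|$ for every component $Q_i$. Then $a$ fixes each component: indeed, the $p$-element $a$ permutes the components in $p$-power orbits, and a non-trivial orbit would force the non-trivial Sylow $T_i := T \cap Q_i$ to coincide with its image in a distinct component, placing $T_i \le Q_i \cap Q_j \le Z(Q_i)$ (distinct components commute element-wise), contradicting $p \mid |Q_i/Z(Q_i)|$.

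Next, for each component $Q_i$, I would apply Theorem~\ref{simple} to the simple quotient $Q_i/Z(Q_i)$: since $a|_{Q_i}$ is a $p$-automorphism centralizing the Sylow $T_i/Z(Q_i)$, it is inner there, represented by some $u_i \in Q_i$. Then $c_{u_i}^{-1} a|_{Q_i}$ acts trivially on $Q_i/Z(Q_i)$, and since $Q_i$ is perfect, such a central automorphism is the identity (the defect map $q \mapsto q^{-1}\phi(q)$ is a homomorphism into $Z(Q_i)$ factoring through the trivial abelianization). Thus $a|_{Q_i} = c_{u_i}$ exactly, forcing $u_i \in C_{Q_i}(T_i)$. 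Since $c_{u_i}$ has $p$-power order and $Z(Q_i)$ is a $p$-group, $u_i$ is a $p$-element; a $p$-element centralizing the Sylow $T_i$ of $Q_i$ must lie in $T_i$ (as $T_i\langle u_i\rangle$ is a $p$-subgroup containing the Sylow), whence $u_i \in Z(T_i)$.

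The crucial step, where Corollary~\ref{quasi-simple} enters, is to promote $u_i \in Z(T_i)$ to an element $y_i \in u_i Z(Q_i)$ centralized by the full normalizer $N_T(Q_i)$, not merely by $T_i$. For each $t \in N_T(Q_i)$, the automorphism $\rho_t \in \Aut(Q_i)$ commutes with $\rho_a|_{Q_i} = c_{u_i}$, because $[a,t]=1$ in $G$. Corollary~\ref{quasi-simple} applied with $x = u_i$ then produces $y_i \in u_i Z(Q_i)$ fixed by every such $\rho_t$ and by $a|_{Q_i}$ itself; automatically $c_{y_i}|_{Q_i} = c_{u_i} = a|_{Q_i}$. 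Choosing representative components for the $T$-orbits and translating by a $T$-transversal to obtain compatible $y_i$'s across each orbit, the product $y := \prod_i y_i \in E(L)$ satisfies $y \in C_L(T)$, $a(y) = y$, and $c_y|_{E(L)} = a|_{E(L)}$. A direct calculation using $a(y) = y$, $a$ centralizing $T$, and $y \in C_L(T)$ then shows that $a' := a c_y^{-1}$ centralizes both $T$ and $E(L)$, so Lemma~\ref{2.2} gives that $a'$—and therefore $a$—is inner. The main obstacle is precisely this passage from $C_{Q_i}(T_i)$ (available from Gross) to $C_{Q_i}(N_T(Q_i))$, which is the content of Corollary~\ref{quasi-simple}.
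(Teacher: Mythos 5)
Your proposal is correct and follows essentially the same route as the paper's second proof: reduce to the components, apply Gross's theorem (Theorem \ref{simple}) together with perfectness to get $a|_{Q_i}=c_{u_i}$ exactly, invoke Corollary \ref{quasi-simple} to replace $u_i$ by an element of $u_iZ(Q_i)$ centralized by $N_T(Q_i)$, take the product over a $T$-transversal of each orbit, and finish with Lemma \ref{2.2}. The only differences are cosmetic (you supply some details, e.g.\ that $Z(Q_i)$ is a $p$-group and that $u_i\in Z(T_i)$, which the paper leaves implicit).
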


 \begin{proof}
    By Lemma \ref{2.2} we may assume that $L=E(L)S$.
  Thus, the $L$ orbits of a component are precisely the $S$-orbits.
  Let $Q$ be a component of $L$.  Let $t$ be the number of conjugates of $Q$ in $G$.
  Since $O_{p'}(G)=1$, $S \cap Q$ is not contained in $Z(Q)$ and since $a$ centralizes
  $S$, $a$ normalizes $Q$. 
  
   Then $a$ induces an inner automorphism on
  $Q/Z(Q)$ by Theorem \ref{simple} and so on $Q$ since it is perfect.   Thus, by Corollary \ref{quasi-simple}, 
  there exists $q \in Q$ such that $aq$ centralizes $Q$ and moreover $q$ centralizes
  any automorphism of $Q$ that centralizes $qZ(Q)/Z)Q)$ in $Q/Z(Q)$.   Since $S$ centralizes
  $a$, it follows that $N_S(Q)$ centralizes $qZ(Q)/Z(Q)$ whence $N_S(Q)$ centralizes $q$.
     Thus, the set   of $S$-conjugates of $q$ consists of 
  $t$ elements with one in each of the $t$ conjugates of $Q$.  In particular, these conjugates commute
  and their product $b$ is thus centralized by $S$.   Moreover, since $qa$ centralizes $Q$ and 
  $S$ centralizes $a$, it follows that $ab$ centralizes the central product of the conjugates of $Q$
  as well as $S$.   
  
  Repeating this for each orbit of components of $L$, we see that there is a  $p$-element
  $c \in E(L)$ such that $ac$ centralizes $SE(L) = L$, whence $a$ induces conjugation
  by $c$ on $L$ and the result follows.
  \end{proof}

Finally we deduce Corollary \ref{cor:center} from Theorem \ref{thm:main}.

So let $G$ be a finite group with $O_{p'}(G)=1$ and let $P$ be 
a Sylow $p$-subgroup of $G$, where $p$ is odd.  Let $x \in Z(P)$.   Then conjugation
by $x$ induces an automorphism of $F^*(G)$ and centralizes a
Sylow $p$-subgroup of $F^*(G)$.   Thus, $x$ is inner on $F^*(G)$.
Thus,  $x \in F^*(G)C_G(F^*(G))= F^*(G)$ and the result follows.

\section{A permutation version of the $Z_p^*$ theorem}

Finally we  prove a   permutation group result that is essentially equivalent to
the $Z_p^*$ theorem.  In particular, for $p=2$, the proof does not require
the classification of finite simple groups.  

\begin{thm}   \label{zpperm} 
Suppose $p$ is a prime, $G$ is a transitive subgroup of $S_n$, and $G$
possesses a   $p$-element $g$ that has a unique fixed point w and
is central in $G_w$, the stabilizer of $w$.
Then  $N:=O_{p'}(G)$ is transitive, $G_w = C_G(g)$
and $G=NC_G(g)$.  
\end{thm}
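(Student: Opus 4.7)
The plan is to reduce the statement to the $Z_p^*$ theorem (Theorem \ref{zp*}). First I will verify that $G_w = C_G(g)$: the inclusion $G_w \subseteq C_G(g)$ is exactly the hypothesis $g \in Z(G_w)$, while for $h \in C_G(g)$, the point $w^h$ must be a fixed point of $g^h = g$, so by uniqueness $w^h = w$ and $h \in G_w$. Next, I will pick a Sylow $p$-subgroup $P$ of $G_w$ containing $g$. Because $g \in Z(G_w)$, we have $g \in Z(P)$, and $P$ is actually Sylow in $G$: any Sylow $p$-subgroup of $G$ containing $g$ lies in $C_G(g) = G_w$, so it equals $P$ up to $G_w$-conjugacy.

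Then I will check that $g$ is isolated in $P$ with respect to $G$. If $g^h \in P$ for some $h \in G$, then $g^h$ fixes $w$; but the unique fixed point of $g^h$ is $w^h$, so $w^h = w$, whence $h \in G_w = C_G(g)$ and $g^h = g$. Applying Theorem \ref{zp*} (and Lemma \ref{wc} to reduce to elements of order $p$ if one prefers), we obtain $gN \in Z(G/N)$, i.e., $[G,g] \subseteq N$ and every $G$-conjugate of $g$ lies in the single coset $gN$.

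The next step, which I expect to be the main obstacle, is promoting the congruence ``$g$ is central modulo $N$'' to the exact factorization $G = NC_G(g)$. I will work inside the subgroup $\langle g \rangle N = \langle g \rangle \ltimes N$; since $N$ is a $p'$-group and $g$ a $p$-element, $\langle g \rangle$ is a Sylow $p$-subgroup there, and by Sylow's theorem any $p$-element of $\langle g \rangle N$ is $N$-conjugate to a power of $g$. Given $h \in G$, the element $g^h$ is a $p$-element of $gN$, so $g^h = n^{-1} g^k n$ for some $n \in N$ and some integer $k$. Reducing modulo $N$ gives $g \equiv g^k \pmod N$, so $g^{k-1} \in \langle g \rangle \cap N = 1$ and $g^k = g$. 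Thus $g^h = g^n$, i.e., $hn^{-1} \in C_G(g)$, and $h \in C_G(g) N$.

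Finally, transitivity of $N$ follows from $G = NC_G(g) = NG_w$ and orbit--stabilizer: $|Nw| = |N|/|N_w| = |N|/|N \cap G_w| = |NG_w|/|G_w| = |G|/|G_w| = n$. Everything outside the promotion step is immediate from the uniqueness of the fixed point; the Sylow/coset argument in the middle paragraph is where the real content lies.
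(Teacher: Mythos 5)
Your proof is correct and follows essentially the same route as the paper: identify $C_G(g)=G_w$, show $g$ is isolated in a Sylow $p$-subgroup via the uniqueness of the fixed point, apply the $Z_p^*$ theorem, and then run a Sylow-conjugacy (i.e.\ Frattini) argument inside $N\langle g\rangle$ to obtain $G=NC_G(g)$ and hence transitivity of $N$. The one spot needing a line of justification is your claim that any Sylow $p$-subgroup of $G$ containing $g$ lies in $C_G(g)$: this is true because $g$ has a unique fixed point, so $n\equiv 1\pmod{p}$, hence every $p$-subgroup of $G$ fixes some point, which for a subgroup containing $g$ must be $w$, giving containment in $G_w=C_G(g)$ --- this is exactly the paper's observation that $p$ divides $n-1$, so that $G_w$ contains a Sylow $p$-subgroup of $G$.
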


\begin{proof}  
Since $g$ has a unique fixed point $w$,   $N_G(\langle g \rangle)$ also fixes
$w$.  By assumption, $G_w \le C_G(g)$, whence $C_G(g)=G_w$.  Let $x \in G$, and
assume $g^x \in C_G(g) = G_w$. Then $g^x$ fixes the unique point $w^x$, but as
$g^x \in G_w$, it also fixes $w$. Thus, $x \in G_w = C_G(g)$ by uniqueness, so
that $g^x = g$. This shows that $g$ is isolated in $C_G(g)$. 
Since $p$ divides $n-1$, $G_w$ contains a Sylow $p$-subgroup $P$ of $G$.  Thus
$g$ is isolated in $P$. By the $Z_p^*$-theorem, it follows that $g$ is central
modulo $N$.  

In particular, $M:=\langle N, g \rangle$ is normal in $G$ and $\langle g
\rangle$ is a Sylow $p$-subgroup of $M$.   By the Frattini argument,
$G=MN_G(\langle g \rangle) =NC_G(g) = NG_w$ and the theorem follows.
\end{proof}

Let us note the previous theorem implies the $Z_p^*$-theorem.  By the usual
reductions (as described after Theorem \ref{zp*}), it suffices to prove this
when $G$ is almost simple and its socle has order divisible by $p$.  Suppose
that $g \in G$ is a nontrivial $p$-element and $g$ is isolated in a Sylow
$p$-subgroup $P$.  Then $g$ is also isolated in $C_G(g)$ (for if $g^a \in
C_G(g)$, then $g$ and $g^a$ are in a Sylow $p$-subgroup $P^b$ of $C_G(g)$ and
so $g^{ab^{-1}}$ are both in $P$, whence $g^a = g^b =g$). Let $G$ act on the
left cosets of $C_G(g)$.    Since $G$ is almost simple, the action is faithful.
Since $g$ is isolated in $C_G(g)$,  $g$ has a unique fixed point in this
action.   By the theorem,  this implies that $O_{p'}(G)$ is transitive and
trivial, whence $g$ is central.


\begin{thebibliography}{9999999}

\bibitem{Ar}  O.D. Artemovich, Isolated elements of prime order in finite groups, Ukranian
Math. J. 40 (1988), 397--400. 

\bibitem{AschbacherKessarOliver2011}  M.  Aschbacher, R.  Kessar and B. Oliver,
Fusion systems in algebra and topology, 
 London Mathematical Society Lecture Note Series, Vol. 391, 
 Cambridge University Press,  Cambridge, 2011, vi+320 pp. 
        


 
 {\bf 210} (1998), 385--418.
 
 
 
 \bibitem{Gl1} 
G.  Glauberman,  
On the automorphism groups of a finite group having no non-identity normal subgroups of odd order, 
Math. Z. 93 1966 154--160. 

\bibitem{Glz} G. Glauberman,  Central elements in core-free groups, J.  Algebra {\bf 4} (1966),  403--420.  

\bibitem{Gl2}  G. Glauberman,  Weakly closed elements in Sylow subgroups, Math. Z. {\bf 107}  (1968),   1--20. 

\bibitem{Gla71}
G.~Glauberman, Global and local properties of finite groups, Finite
  simple groups ({P}roc. {I}nstructional {C}onf., {O}xford, 1969), Academic
  Press, London, 1971, pp.~1--64. \MR{0352241 (50 \#4728)}

\bibitem{GL16}
G.~ Glauberman and J.~Lynd,  Control of fixed points and existence
  and uniqueness of centric linking systems, Invent. Math. \textbf{206}
  (2016), no.~2, 441--484.
 
\bibitem{Gor}  D. Gorenstein, Finite Simple Groups -- An Introduction to their Classification,
Plenum Press, New York, 1982. 


\bibitem{GLS}  D. Gorenstein, R. Lyons and R. Solomon,
 The classification of the finite simple groups. Number 3. Part I. Chapter A. 
 Almost simple K-groups. Mathematical Surveys and Monographs, 40.3. American 
 Mathematical Society, Providence, RI, 1998. xvi+419 pp.
 
 
 
 \bibitem{Gr} F. Gross, Automorphisms which centralize a Sylow p-subgroup. J. Algebra {\bf 77} (1982),  202--233.
 
 \bibitem{GR}  R. Guralnick and G.  Robinson, On 
 extensions of the Baer-Suzuki theorem. Israel J. Math. {\bf 82} (1993),  281--297.
 
 \bibitem{GMN} R. Guralnick, G. Malle and G. Navarro, 
Self-normalizing Sylow subgroups,  Proc. Amer. Math. Soc. 132 (2004),  973--979. 
  
\bibitem{I}
I. M. Isaacs, Finite Group Theory,
AMS, Providence, 2008.
\bibitem{Ko}  E. I. Khukhro and V. D. Mazurov,  
Unsolved Problems in Group Theory. The Kourovka Notebook,
arXiv:1401.0300  
\bibitem{Lu}   F.  L\"ubeck,  
http://www.math.rwth-aachen.de/~Frank.Luebeck/chev/CentSSClasses/index.html



\bibitem{Mu} M. Murai, 
Blocks of normal subgroups, automorphisms of groups, and the Alperin-McKay conjecture, 
Kyoto J. Math. {\bf 54} (2014),   199--238. 

\bibitem{Oliver2013}  B. Oliver,   Existence and uniqueness of linking systems: Chermak's proof via obstruction theory,
 Acta Math. {\bf 211}  (2013),  141--175.



\end{thebibliography}
\end{document}